\newcommand{\R}{\mathbb{R}}
\theoremstyle{plain}
\newtheorem{theorem}{Theorem}[section]
\newtheorem{corollary}[theorem]{Corollary}
\newtheorem{lemma}[theorem]{Lemma}
\newtheorem{proposition}[theorem]{Proposition}
\theoremstyle{definition}
\newtheorem{definition}[theorem]{Definition}
\newtheorem{remark}[theorem]{Remark}
\newtheorem{example}[theorem]{Example}
\begin{document}
\title[Fractional Differential and Integral]{A New Fractional Derivative and its Fractional Integral with some applications}
\author{Fahed~Zulfeqarr}
\address{Department of Applied Mathematics, Gautam Buddha University, Greater Noida, Uttar Pradesh 201312, India}
\email{fahed@gbu.ac.in}
\author{Amit~Ujlayan}
\address{Department of Applied Mathematics, Gautam Buddha University, Greater Noida, Uttar Pradesh 201312, India}
\email{amitujlayan@gbu.ac.in}
\author{Priyanka Ahuja}
\address{Department of Applied Mathematics, Gautam Buddha University, Greater Noida, Uttar Pradesh 201312, India}
\email{pahuja836@gmail.com}
\subjclass{Primary 26A33; Secondary ....}
\keywords{Fractional Derivative, Linear Operator, Rolle's Theorem.}

\begin{abstract}
A new derivative, called deformable derivative, is introduced here which is equivalent to ordinary derivative in the sense that one implies other. The deformable derivative is defined using limit approach like that of ordinary one but with respect to a parameter varying over unit interval. Thus it could also be regarded as a fractional derivative. Reason of calling it as deformable derivative is because of its intrinsic property of continuously deforming function to derivative. This is substantiated by its linear connection to function and its derivative. Besides discussing some of its basic properties, we discover the forms of Rolle's, Mean Value and Taylor's theorems. The fundamental theorem of calculus for this fractional derivative could be taken as definition of its fractional integral. As a theoretical application some fractional differential equations are solved.  
\end{abstract}
\date{\today}
\maketitle
\section*{Introduction}
\noindent A little variation to looking some mathematical concept may sometimes sheds light on some hidden facts. For instance, continuity and differentiability are both limits concepts but defined differently. However latter one tells more geometric facts about function than that of former one. Similarly generalization of any mathematical concept besides being a great source of motivation in its own not just simplifies various intricate facts pertaining to it but extend its applicability to a wider class of problems. For instance, the first proof of prime number theorem, a very popular theorem in real numbers, goes through the techniques of complex analysis. Likewise there have been several generalizations of the notion of derivative to fractional derivative since the time L'Hospital first asked this question to Leibniz  in his letter \cite{Leibniz} in $1695$ about a meaningful interpretation of $\;\frac{d^{1/2}y}{d x^{1/2}}$. Various types of fractional derivative have been introduced so far. Ironically most of the definitions of fractional derivative carry integral form. However only few grab attention of mathematicians and become popular in world of fractional calculus namely Riemann-Liouville, Caputo, Hadamard,  Grunwald-Letnikov and Riesz fractional derivatives. To gain a good insight into fractional calculus, reader is advised to go through \cite{Miller,Podlubny,Oldham}.\\

As pointed most of definitions of  fractional derivative use the integral form. In contrast, R. Khalil  introduced a limit based definition of fraction derivative \cite{Khalil70} in $2014$, calling it as conformable fraction derivative in analogy to that of standard one. However his definition lacks to include zero and negative numbers. We hereby define a new derivative, referred to as deformable derivative, which is much simpler than that of Khalil's one and overcomes not only this shortcoming but ranges over a wider class of functions.\\

\begin{definition} Let $f(t)$ be real valued function defined on interval $(a,b)$. For a given number $\alpha$, 
$0\leqslant\alpha\leqslant 1$, we define {\it deformable derivative} by the following limit: 
\begin{equation*}
\lim_{\epsilon\rightarrow0}\frac{(1+\epsilon\beta)f(t+\epsilon\alpha)-f(t)}{\epsilon},\quad\mbox{where}\;\;\alpha+\beta=1.\tag{*}
\end{equation*}
 If this limit exists, we denote it by $D^\alpha f(t)$. 
\end{definition}
\begin{remark}
One can note that the definition (*) is compatible with $\alpha= 0,1$. For, if $\alpha=0$, $D^0 f(t)=f(t)$ which is the usual convention and if $\alpha=1$,
$Df(t)=f'(t)$. Therefore it can be deemed as a new fractional derivative with respect to parameter $\alpha$. Throughout the article until unless specified we assume that $0<\alpha\leqslant 1.$ The deformable derivative with a given $\alpha$ is sometimes referred to as $\alpha$-derivative as well. 
\end{remark} 

\indent In first section we derive a formula connecting both $\alpha$-derivative and ordinary derivative, viz. $D^\alpha f(t)=\beta f(t)+\alpha D f(t)$ and end it up with a conclusion that for a function, $\alpha$-differentiability is same as differentiability in the sense that existence of one implies that of other. Section two focuses on some of basic properties of deformable  derivative. We illustrate geometrically the behavior of operator $D^\alpha$ on some elementary functions. These examples exhibit how deformable  derivative sits between function and its derivative. Section three discusses the form of extension of Rolle's, Mean-Value and Taylor's theorems in the context of deformable derivative. Next section introduces the integral fractional operator defined as follows: 
\[
I^{\alpha}_{a}f(t)=\frac{1}{\alpha}e^{\frac{-\beta }{\alpha}t}\int_{a}^{t}e^{\frac{\beta }{\alpha}x}f(x)dx
\]
for continuous function $f$ over the interval $(a,b)$. Some basic properties of this fractional integral operator $I^{\alpha}_a$ are also discussed. In the last section we solve some fractional differential equations using these operators.\\

\section{\textbf{Preliminary Results}}
This section exhibits a relation of deformable derivative with function and its derivative. This is where the name deformable derivative is taken from. The relation further exposes the interesting fact about the graph of deformable derivative lying linearly between that of function and derivative. 
\bigskip

The first result is quite natural and asserts that {\it differentiability implies $\alpha$-differentiability}. The proof connects both operators.

\begin{theorem}\label{diff.1}
A differentiable $f$ at a point $t\in (a,b)$ is always $\alpha$- differentiable at that point for any $\alpha$. Moreover in this case we have 
\begin{equation}
D^\alpha f(t)=\beta f(t)+\alpha D f(t),\quad\mbox{where\;\;} \alpha+\beta=1.\tag{**}
\end{equation}
\end{theorem}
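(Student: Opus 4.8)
The plan is to compute the defining limit (*) directly, algebraically separating the increment that produces the ordinary derivative from the term that recovers the function value. First I would distribute the factor $(1+\epsilon\beta)$ over $f(t+\epsilon\alpha)$, so that the numerator of (*) becomes $[f(t+\epsilon\alpha)-f(t)]+\epsilon\beta f(t+\epsilon\alpha)$. Dividing through by $\epsilon$ then splits the quotient into the two summands $\frac{f(t+\epsilon\alpha)-f(t)}{\epsilon}$ and $\beta f(t+\epsilon\alpha)$, and it suffices to evaluate the limit of each as $\epsilon\to 0$.

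For the first summand I would insert a compensating factor of $\alpha$ and use the substitution $h=\epsilon\alpha$, which is legitimate since $0<\alpha\leqslant 1$ guarantees that $h\to 0$ exactly when $\epsilon\to 0$; this rewrites the summand as $\alpha\cdot\frac{f(t+h)-f(t)}{h}$. Because $f$ is differentiable at $t$, the quotient on the right tends to $Df(t)$, so this term contributes $\alpha\,Df(t)$. For the second summand, differentiability of $f$ at $t$ forces continuity there, whence $f(t+\epsilon\alpha)\to f(t)$ and the term contributes $\beta f(t)$. Adding the two limits gives $D^\alpha f(t)=\beta f(t)+\alpha\,Df(t)$, which is exactly (**); in particular the limit exists, so $f$ is $\alpha$-differentiable at $t$.

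There is no serious obstacle here: the whole argument is a rearrangement of the difference quotient followed by an appeal to the definition of the ordinary derivative and to the fact that differentiability implies continuity. The single point deserving a moment's care is the rescaling $h=\epsilon\alpha$, which is available only because we have assumed $\alpha>0$. I would therefore carry out the main computation under $0<\alpha\leqslant 1$ and dispose of the boundary case $\alpha=0$ separately, where the quotient in (*) reduces immediately to $\beta f(t)=f(t)$, consistent with the convention $D^0 f(t)=f(t)$ recorded after the definition.
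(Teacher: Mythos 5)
Your proposal is correct and follows essentially the same route as the paper's own proof: distribute the factor $(1+\epsilon\beta)$, split the quotient into the ordinary difference quotient (contributing $\alpha\,Df(t)$ via the rescaling $h=\epsilon\alpha$) and the term $\beta f(t+\epsilon\alpha)$ (contributing $\beta f(t)$ by continuity of a differentiable function). Your explicit care about the rescaling requiring $\alpha>0$ and your separate treatment of $\alpha=0$ are minor refinements the paper glosses over (the paper's displayed computation even carries a harmless sign typo, writing $-\beta$ where $+\beta$ belongs), but the underlying argument is identical.
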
	
\begin{proof}
By definition we have
\begin{eqnarray*}
D^{\alpha}f(t)&=&\lim\limits_{\epsilon\rightarrow0}\frac{\left(1+\epsilon\beta\right)f(t+\alpha \epsilon)-f(t)}{\epsilon}\\
&=& \lim\limits_{\epsilon\rightarrow 0}\left(\frac{f(t+\alpha \epsilon)-f(t)}{\epsilon}-\beta f(t+\alpha \epsilon)\right)\\
&=& \alpha\cdot D f(t)-\beta\cdot \lim\limits_{\epsilon\rightarrow0}f(t+\alpha \epsilon).
\end{eqnarray*}
Both the terms exist as $f$, being differentiable at $t$ is continuous as well. Hence theorem follows.
\end{proof}
\bigskip

The second result of the section that is also natural is about {\it Does $\alpha$-differentiability imply continuity} ? Answer is affirmative. However to prove the claim we need an auxiliary result concerning to locally bounded function. A function is said to be {\it locally bounded} at a point if it is bounded in some neighbourhood of that point. Precisely a function $f$ defined on $(a,b)$ is locally bounded at $t$ if there are positive numbers $M\;\&\;\delta$ such that
\[
\bigl\lvert f(t+\epsilon)\bigr\rvert\leqslant M\quad \mbox{whenever}\;\;\bigl\lvert\epsilon\bigr\rvert<\delta.
\]
Here $\delta$ is chosen sufficiently small so that $~t+\epsilon \in(a,b)$.\\

\begin{lemma}\label{locally.bounded}
Suppose $f$ is $\alpha$-differentiable in $(a,b)$ with respect to some $\alpha$. Then $f$ is locally bounded there.
\end{lemma}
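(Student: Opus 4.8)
The plan is to fix an arbitrary point $t\in(a,b)$ and to extract boundedness of $f$ near $t$ from the mere \emph{existence} of the defining limit. Write $L=D^{\alpha}f(t)$ and set
\[
g(\epsilon)=\frac{(1+\epsilon\beta)f(t+\epsilon\alpha)-f(t)}{\epsilon},
\]
so that $g(\epsilon)\to L$ as $\epsilon\to 0$ by hypothesis. Since a function possessing a finite limit is bounded on some punctured neighbourhood of the limit point, there exist $\delta_{1}>0$ and a constant $C>0$ with $|g(\epsilon)|\leqslant C$ for all $0<|\epsilon|<\delta_{1}$. This single observation is the whole source of the estimate; everything afterwards is algebra.

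Next I would rearrange the quotient so as to isolate the values of $f$. Multiplying through by $\epsilon$ gives
\[
(1+\epsilon\beta)\,f(t+\epsilon\alpha)=f(t)+\epsilon\,g(\epsilon),
\]
whose right-hand side is bounded by $|f(t)|+\delta_{1}C$ once $0<|\epsilon|<\delta_{1}$. It then remains to divide by the weight $1+\epsilon\beta$. Because $\beta=1-\alpha$ lies in $[0,1)$ under the standing assumption $0<\alpha\leqslant 1$, we have $1+\epsilon\beta\to 1$, so there is $\delta_{2}>0$ with $1+\epsilon\beta\geqslant\tfrac{1}{2}>0$ for $|\epsilon|<\delta_{2}$. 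Putting $\delta_{0}=\min(\delta_{1},\delta_{2})$ yields
\[
|f(t+\epsilon\alpha)|\leqslant 2\bigl(|f(t)|+\delta_{1}C\bigr)\qquad(0<|\epsilon|<\delta_{0}).
\]

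Finally, since $\alpha>0$, the substitution $s=\epsilon\alpha$ carries the range $0<|\epsilon|<\delta_{0}$ onto a punctured neighbourhood $0<|s|<\alpha\delta_{0}$ of $0$; adjoining the finite value $f(t)$ at $s=0$ shows that $f$ is bounded on the full neighbourhood $(t-\alpha\delta_{0},\,t+\alpha\delta_{0})$, which is exactly local boundedness at $t$. As $t$ was arbitrary, the lemma follows. I expect the only genuine obstacle to be the inversion of the weight $1+\epsilon\beta$: one must keep this factor bounded away from zero, and this is precisely where $\beta<1$ (equivalently $\alpha>0$) is used, both to keep the denominator positive near $\epsilon=0$ and to make the rescaling $s=\epsilon\alpha$ a legitimate change of neighbourhood.
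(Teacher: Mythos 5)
Your proof is correct and follows essentially the same route as the paper: use the existence of the limit to bound the difference quotient on a punctured neighbourhood, solve for $(1+\epsilon\beta)f(t+\epsilon\alpha)$, and divide by the weight. In fact you are slightly more careful than the paper, which leaves the factor $\lvert 1+\beta\epsilon\rvert$ in the denominator without explicitly bounding it away from zero or rescaling $s=\epsilon\alpha$ to produce a genuine neighbourhood of $t$.
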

\begin{proof}
Suppose $f$ is $\alpha$-differentiable at $t$, there exist a number $\delta>0$ such that
\begin{eqnarray*}
\bigl\lvert (1+\epsilon\beta)f(t+\epsilon\alpha)-f(t)-\epsilon\cdot D^\alpha f(t)\bigr\rvert \leqslant \bigl\lvert\epsilon\bigr\rvert,\quad &&\mbox{whenever}\;\;\bigl\lvert\epsilon\bigr\rvert<\delta\\
\Rightarrow\quad \bigl\lvert (1+\epsilon\beta)f(t+\epsilon\alpha)\bigr\rvert \leqslant |\epsilon|+\bigl\lvert f(t)+\epsilon\cdot D^\alpha f(t)\bigr\rvert,\quad &&\mbox{whenever}\;\;\bigl\lvert\epsilon\bigr\rvert<\delta\\
\leqslant |\epsilon|\left(1+\bigl\lvert D^\alpha f(t)\bigr\rvert \right)+\bigl\lvert f(t)\bigr\rvert,\quad &&\mbox{whenever}\;\;\bigl\lvert\epsilon\bigr\rvert<\delta\\
\Rightarrow\quad \bigl\lvert f(t+\epsilon\alpha)\bigr\rvert \leqslant \frac{|\epsilon|\left(1+\bigl\lvert D^\alpha f(t)\bigr\rvert \right)+\bigl\lvert f(t)\bigr\rvert}{\bigl\lvert 1+\beta \epsilon\bigr\rvert} \quad &&\mbox{whenever}\;\;\bigl\lvert\epsilon\bigr\rvert<\delta
\end{eqnarray*}
This yields that $f$ is locally bounded at $t$.
 \end{proof}
 
 The next theorem establish the claim that $\alpha$-differentiability implies continuity.\\
 
\begin{theorem}\label{diff.2}
 Let $f$ be $\alpha$-differentiable at a point $t$ for some $\alpha$. Then it is continuous there.
\end{theorem}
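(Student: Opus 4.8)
The plan is to show that $\lim_{\epsilon\rightarrow0}f(t+\epsilon\alpha)=f(t)$, which (because $0<\alpha\leqslant1$) is precisely the assertion that $f$ is continuous at $t$: as $\epsilon$ runs through a small interval about $0$, the argument $t+\epsilon\alpha$ sweeps out a full neighbourhood of $t$. The starting observation is that the existence of the \emph{finite} limit defining $D^\alpha f(t)$ forces its numerator to vanish. Writing $N(\epsilon)=(1+\epsilon\beta)f(t+\epsilon\alpha)-f(t)$, we have the identity $N(\epsilon)=\epsilon\cdot\frac{N(\epsilon)}{\epsilon}$; since $\frac{N(\epsilon)}{\epsilon}\rightarrow D^\alpha f(t)$ while $\epsilon\rightarrow0$, the product tends to $0\cdot D^\alpha f(t)=0$. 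Thus $(1+\epsilon\beta)f(t+\epsilon\alpha)\rightarrow f(t)$.

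The second step is to peel off the factor $(1+\epsilon\beta)$. I would split $(1+\epsilon\beta)f(t+\epsilon\alpha)=f(t+\epsilon\alpha)+\epsilon\beta\,f(t+\epsilon\alpha)$ and argue that the second summand is negligible. This is exactly where Lemma \ref{locally.bounded} enters: $\alpha$-differentiability at $t$ guarantees that $f$ is locally bounded there, so $\bigl\lvert f(t+\epsilon\alpha)\bigr\rvert\leqslant M$ for all sufficiently small $\epsilon$, whence $\bigl\lvert\epsilon\beta\,f(t+\epsilon\alpha)\bigr\rvert\leqslant|\beta|M|\epsilon|\rightarrow0$. Combining this with the first step,
\[
f(t+\epsilon\alpha)-f(t)=\bigl[(1+\epsilon\beta)f(t+\epsilon\alpha)-f(t)\bigr]-\epsilon\beta\,f(t+\epsilon\alpha)\longrightarrow 0-0=0,
\]
which is the desired conclusion.

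I do not anticipate a genuine obstacle; the only point requiring care is to confirm that the vanishing of the numerator does not secretly presuppose continuity. It does not, for it follows purely from the algebraic identity $N(\epsilon)=\epsilon\cdot(N(\epsilon)/\epsilon)$ together with the \emph{assumed} existence of the limit $N(\epsilon)/\epsilon$, so no circularity arises. An alternative to invoking the boundedness lemma would be to divide outright: since $1+\epsilon\beta\rightarrow1\neq0$, one may write $f(t+\epsilon\alpha)=\frac{(1+\epsilon\beta)f(t+\epsilon\alpha)}{1+\epsilon\beta}$ and pass to the limit using the quotient rule, obtaining $f(t)/1=f(t)$. The boundedness route is cleaner, however, and is presumably the reason Lemma \ref{locally.bounded} was isolated just beforehand.
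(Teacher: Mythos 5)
Your proposal is correct and follows essentially the same route as the paper's own proof: both use the decomposition $f(t+\epsilon\alpha)-f(t)=\bigl[(1+\epsilon\beta)f(t+\epsilon\alpha)-f(t)\bigr]-\epsilon\beta f(t+\epsilon\alpha)$, kill the bracketed term via $\epsilon\cdot\frac{N(\epsilon)}{\epsilon}\rightarrow 0\cdot D^\alpha f(t)$, and dispose of the remaining term using the local boundedness supplied by Lemma \ref{locally.bounded}. Your explicit remark that $\alpha>0$ makes $t+\epsilon\alpha$ sweep a full neighbourhood of $t$ is a small point the paper leaves implicit, but the argument is the same.
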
	
	
\begin{proof}
For continuity, it suffices to prove the following: 
	
\[
\lim\limits_{\epsilon\rightarrow0}\left(f(t+\epsilon\alpha)-f(t)\right)=0.
\]
	
The left hand side can also be written as:
	
\begin{eqnarray*}
&&\lim\limits_{\epsilon\rightarrow0}\frac{\left(1+\epsilon\beta\right)f(t+\epsilon\alpha)-f(t)-\epsilon\beta f(t+\epsilon\alpha)}{\epsilon}\epsilon\\
&&=\lim\limits_{\epsilon\rightarrow0}\left(\frac{\left(1+\epsilon\beta\right)f(t+\epsilon\alpha)-f(t)}{\epsilon}\cdot\epsilon-\beta\epsilon\cdot f(t+\epsilon\alpha)\right)\\
&&= D^\alpha f(t)\cdot 0-\beta\lim\limits_{\epsilon\rightarrow0}\epsilon f(t+\epsilon\alpha)=0,\quad(\mbox{by hypothesis})\\
&&=-\beta\lim\limits_{\epsilon\rightarrow0}\epsilon f(t+\epsilon\alpha)=0.\quad(\mbox{by using lemma \ref{locally.bounded}})
\end{eqnarray*}
This completes theorem.
\end{proof} 

A strong version of the theorem \ref{diff.2} as an easy consequence is given in the following corollary: 

\begin{corollary}\label{diff.3}
 An $\alpha$-differentiable function $f$ defined in $(a,b)$ is differentiable as well.
\end{corollary}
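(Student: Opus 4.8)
The plan is to run Theorem~\ref{diff.1} in reverse: starting from the hypothesis that $D^\alpha f(t)$ exists, I would recover the ordinary derivative $Df(t)$ by algebraically isolating the standard difference quotient inside the defining limit. The crucial auxiliary input is Theorem~\ref{diff.2}, which guarantees that an $\alpha$-differentiable $f$ is continuous at $t$; this continuity is exactly what lets me evaluate the ``non-derivative'' piece of the expression.

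First I would rewrite the numerator of the defining quotient as
\[
(1+\epsilon\beta)f(t+\epsilon\alpha)-f(t)=\bigl(f(t+\epsilon\alpha)-f(t)\bigr)+\epsilon\beta f(t+\epsilon\alpha),
\]
so that
\[
\frac{(1+\epsilon\beta)f(t+\epsilon\alpha)-f(t)}{\epsilon}=\frac{f(t+\epsilon\alpha)-f(t)}{\epsilon}+\beta f(t+\epsilon\alpha).
\]
Since $f$ is continuous at $t$ by Theorem~\ref{diff.2}, the last term tends to $\beta f(t)$ as $\epsilon\to0$. Because the whole left-hand side tends to $D^\alpha f(t)$ by hypothesis, I can then conclude that $\lim_{\epsilon\to0}\frac{f(t+\epsilon\alpha)-f(t)}{\epsilon}$ exists and equals $D^\alpha f(t)-\beta f(t)$.

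The remaining step is a change of variable. Writing $h=\alpha\epsilon$ and using the standing convention $0<\alpha\leqslant1$ (so that $\alpha\neq0$ and $h\to0$ precisely when $\epsilon\to0$), I would rewrite $\frac{f(t+\epsilon\alpha)-f(t)}{\epsilon}=\alpha\cdot\frac{f(t+h)-f(t)}{h}$. Since the left side has a limit and $\alpha$ is a nonzero constant, the ordinary difference quotient $\frac{f(t+h)-f(t)}{h}$ must converge as well, which is exactly the statement that $f$ is differentiable at $t$, with $Df(t)=\frac{1}{\alpha}\bigl(D^\alpha f(t)-\beta f(t)\bigr)$; this is consistent with formula~(**).

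I expect no serious obstacle here, since the argument is essentially the algebra of Theorem~\ref{diff.1} read backwards. The one point that must be handled with care is the appeal to continuity: without Theorem~\ref{diff.2} the term $\beta f(t+\epsilon\alpha)$ could not be resolved, and the decomposition would stall. The nonvanishing of $\alpha$ (from the convention $0<\alpha\leqslant1$) is likewise essential, as it is what makes the substitution $h=\alpha\epsilon$ legitimate and permits dividing by $\alpha$ at the final step.
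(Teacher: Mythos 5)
Your proposal is correct and follows essentially the same route as the paper's own proof: the same decomposition $(1+\epsilon\beta)f(t+\epsilon\alpha)-f(t)=\bigl(f(t+\epsilon\alpha)-f(t)\bigr)+\epsilon\beta f(t+\epsilon\alpha)$, the same appeal to Theorem~\ref{diff.2} to handle the term $\beta f(t+\epsilon\alpha)$, and the same division by $\alpha$ to recover the ordinary difference quotient. Your write-up is in fact slightly more careful than the paper's, since you establish existence of the limit before asserting the identity $Df(t)=\tfrac{1}{\alpha}\bigl(D^\alpha f(t)-\beta f(t)\bigr)$ and make the substitution $h=\alpha\epsilon$ explicit, but the underlying argument is identical.
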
 
\begin{proof}
For the existence of derivative we use its definition
\begin{eqnarray*}
Df(t)&=& \frac{1}{\alpha}\cdot\lim\limits_{\epsilon\rightarrow 0}\frac{f(t+\alpha \epsilon)-f(t)}{\epsilon}\\
&=&\frac{1}{\alpha}\cdot\lim\limits_{\epsilon\rightarrow 0}\frac{\left(1+\epsilon\beta\right)f(t+\alpha \epsilon)-f(t)-\epsilon\beta f(t+\alpha \epsilon)}{\epsilon}\\
\Rightarrow \quad D f(t) &=&  \frac{1}{\alpha}\left( \lim\limits_{\epsilon\rightarrow 0}\frac{\left(1+\epsilon\beta\right)f(t+\alpha \epsilon)-f(t)}{\epsilon}-\beta\cdot \lim\limits_{\epsilon\rightarrow0}f(t+\alpha \epsilon)\right)
\end{eqnarray*}
By using hypothesis and theorem \ref{diff.2}, we get the result done. 
\end{proof}
 
We summarise all these by saying that two concepts $\alpha$-differentiability and differentiability of a function defined in $(a,b)$ are equivalent in the sense that one implies other. We write it as separate theorem.\\
 
\begin{theorem}\label{diff.4}
Let $f$ be defined in $(a,b)$. For any $\alpha$, $f$ is $\alpha$-differentiable if and only if it is differentiable.
\end{theorem}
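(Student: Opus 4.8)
The plan is to prove this equivalence simply by assembling the one-directional implications already established in the section, since no new analytic content is required. The statement is a biconditional, so I would split it into its two halves and dispatch each by citing the appropriate earlier result.

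First I would treat the implication \emph{differentiable $\Rightarrow$ $\alpha$-differentiable}. This is precisely the content of Theorem \ref{diff.1}, which not only guarantees that $D^\alpha f(t)$ exists whenever $Df(t)$ does but also records the explicit relation $D^\alpha f(t)=\beta f(t)+\alpha Df(t)$ with $\alpha+\beta=1$. Thus this direction follows immediately. Next I would handle the converse, \emph{$\alpha$-differentiable $\Rightarrow$ differentiable}, which is exactly Corollary \ref{diff.3}. Its proof writes $Df(t)$ as $\tfrac1\alpha$ times the difference of two limits: the first is $D^\alpha f(t)$, available by hypothesis, and the second is $\lim_{\epsilon\to0} f(t+\alpha\epsilon)$, which equals $f(t)$ because $\alpha$-differentiability forces continuity (Theorem \ref{diff.2}, itself resting on the local boundedness Lemma \ref{locally.bounded}). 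Combining the two implications gives the equivalence.

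Because all the substantive work has already been carried out, I do not anticipate any genuine obstacle here; the theorem is essentially a consolidation statement. The only point that deserves care — and where a reader might otherwise worry — is the hidden dependence of the converse on continuity: the term $\lim_{\epsilon\to0} f(t+\alpha\epsilon)$ must be shown to equal $f(t)$, which is exactly why Theorem \ref{diff.2} and Lemma \ref{locally.bounded} were proved beforehand. I would therefore flag this dependence explicitly so that the logical chain \ref{locally.bounded} $\to$ \ref{diff.2} $\to$ \ref{diff.3} is transparent, and then conclude that the two notions coincide.
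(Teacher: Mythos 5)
Your proof is correct and matches the paper's approach exactly: the paper presents Theorem \ref{diff.4} as a consolidation of Theorem \ref{diff.1} (differentiable $\Rightarrow$ $\alpha$-differentiable) and Corollary \ref{diff.3} ($\alpha$-differentiable $\Rightarrow$ differentiable), which is precisely the two-implication assembly you describe. Your explicit flagging of the dependence chain \ref{locally.bounded} $\to$ \ref{diff.2} $\to$ \ref{diff.3} is a welcome clarification but does not change the argument.
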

 
\begin{remark}\label{rem.1.1}
We make a remark here that over the interval $(a,b)$, the existence of $\alpha$-derivative with respect to one particular value of $\alpha>0$ is sufficient for the existence of $\alpha$-derivative with respect to all other values of $\alpha$. 
\end{remark}

Though the most important case for $\alpha$ is when $\alpha\in [0,1]$  but what happens if $\alpha \in (n,n+1]$ for any natural number $n$ and what will be definition?
\\

\begin{definition}
Suppose $f$ is $n$-times differentiable at $t\in (a,b).$  For given $\alpha\in(n,n+1]$, we extend deformable derivative in a very natural way and define it by the following limit:
\[
D^\alpha f(t)=\lim\limits_{\epsilon\rightarrow 0}\frac{\left(1+\epsilon\{\beta\}\right))D^{n}f\left(t+\epsilon\{\alpha\}\right)-D^{n}f(t)}{\epsilon}
\]
where $\{\alpha\}$ is the fractional part of $\alpha$ and $\{\alpha\}+\{\beta\}=1$.
\end{definition}

As the consequence of the above definition, if $f^{(n+1)}$ exists, we have
\[
D^\alpha f(t)=\{\beta\}D^{n}f(t)+\{\alpha\}D^{n+1}f(t).
\]

\section{\textbf{Basic Properties of Deformable Derivative}}
\noindent Apart from discussing some fundamental properties of deformable derivative like linearity and commutativity the section deals with fundamental theorems: Rolle's, Mean-Value and Taylor's theorems. The geometric illustration of $D^{\alpha}f$ for some elementary functions $f$ is also given.

\begin{theorem}\label{properties.1}
The operator $D^{\alpha}$ possesses the following properties:
\begin{itemize}
\item [$($\rm a$)$] Linearity: $D^{\alpha} \left(a f+b g\right)=a D^{\alpha} f+b D^{\alpha} g$
\item [$($\rm b$)$]  Commutativity: $D^{\alpha_1}\cdot D^{\alpha_2}=D^{\alpha_2}\cdot D^{\alpha_1}$. Hence in general, for any positive integer $n\in \mathbb N$ we have $D^\alpha \cdot D^n=D^n\cdot D^\alpha$
\item [$($\rm c$)$] For a constant function $k$, \;$D^\alpha(k)=\beta k$.
\item [$($\rm d$)$] $D^{\alpha} (f\cdot g)= \left(D^{\alpha}f\right)\cdot g+\alpha f\cdot D g.$ Hence $D^\alpha$ does not obey the Leibniz rule. 
\end{itemize}
\end{theorem}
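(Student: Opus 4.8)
The plan is to reduce every one of the four assertions to the linear connection formula (**) established in Theorem~\ref{diff.1}, namely $D^\alpha f = \beta f + \alpha D f$ with $\alpha+\beta=1$. Since Corollary~\ref{diff.3} guarantees that an $\alpha$-differentiable function is genuinely differentiable, this formula is available throughout, and each property then collapses onto the corresponding well-known property of the ordinary derivative $D$. For part (a), I would substitute $af+bg$ into (**) and invoke the linearity of $D$, obtaining $D^\alpha(af+bg)=\beta(af+bg)+\alpha D(af+bg)=a(\beta f+\alpha Df)+b(\beta g+\alpha Dg)$, which is exactly $aD^\alpha f+bD^\alpha g$. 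Part (c) is the same computation applied to a constant $k$, where $Dk=0$ leaves only $D^\alpha k=\beta k$.

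For the commutativity in part (b), I would write $\beta_i=1-\alpha_i$ and apply (**) twice. Expanding $D^{\alpha_1}(D^{\alpha_2}f)=\beta_1\beta_2 f+(\beta_1\alpha_2+\alpha_1\beta_2)Df+\alpha_1\alpha_2 D^2 f$ and noting that this expression is symmetric in the two indices yields $D^{\alpha_1}D^{\alpha_2}=D^{\alpha_2}D^{\alpha_1}$. The special case $D^\alpha D^n=D^n D^\alpha$ follows by the same substitution, since both sides equal $\beta D^n f+\alpha D^{n+1}f$; here one must assume $f$ is sufficiently many times differentiable so that the ordinary derivatives $D^n f$ and $D^{n+1}f$ exist and commute.

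For the modified product rule in part (d), I would expand (**) on $fg$ using the ordinary Leibniz rule $D(fg)=fDg+gDf$ to get $D^\alpha(fg)=\beta fg+\alpha fDg+\alpha gDf$, then regroup as $\beta fg+\alpha gDf=(\beta f+\alpha Df)g=(D^\alpha f)g$, arriving at $D^\alpha(fg)=(D^\alpha f)g+\alpha fDg$. Comparing this with the genuine Leibniz form $(D^\alpha f)g+f(D^\alpha g)$ exhibits the discrepancy: the second term carries a bare factor $\alpha Dg$ rather than the full $D^\alpha g=\beta g+\alpha Dg$, so $D^\alpha$ fails the classical Leibniz rule, as claimed.

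I do not anticipate a serious obstacle, because formula (**) effectively linearises the operator and transfers each identity to a routine fact about $D$. The only point that genuinely demands care is the differentiability bookkeeping in part (b): one should make explicit the smoothness hypotheses under which the iterated ordinary derivatives exist, so that the symmetry of the expansion and the commutation $D^n D^{n+1}=D^{n+1}D^n$ are actually licensed.
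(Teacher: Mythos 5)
Your proposal is correct and follows essentially the same route as the paper: every part is reduced to the relation $D^\alpha = \beta I + \alpha D$ from Theorem~\ref{diff.1}, with commutativity read off from the symmetric expansion $\beta_1\beta_2 f + (\alpha_1\beta_2+\alpha_2\beta_1)Df + \alpha_1\alpha_2 D^2 f$. The only cosmetic difference is in part (a), where the paper notes linearity directly from the limit definition while you derive it from (**); your added care about the smoothness hypotheses in part (b) is a reasonable refinement that the paper leaves implicit.
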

\begin{proof}
Linearity is evident from definition. Commutativity follows readily by noticing the symmetries in the expression below:
\[
D^{\alpha_{1}}\left(D^{\alpha_{2}}f\right)
=\beta_{1}\beta_{2}f+(\alpha_{1}\beta_{2}+\alpha_{2}\beta _{1})D f+\alpha_{1}\alpha_{2}D^2 f, 
\]
where $\alpha_i+\beta_i=1$ for $i=1,2$. Using the relation: $D^\alpha =\beta I+\alpha D$, the third and fourth can be easily established. Violation of Leibniz rule in part (d) motivates to regard $D^\alpha$ as fractional derivative. Readers are advised to see \cite{Tarasov}.
\end{proof}

Most of the familiar functions behave well with respect to differentiation so their deformable derivatives can be obtained from expression (**) in theorem \ref{diff.1}. We list out the deformable derivatives of some elementary functions in the following proposition. 

\begin{proposition}
$~$
\begin{enumerate}
\item $ D^{\alpha} (t^r )=\beta t^r+r\alpha t^{r-1}$, \quad $r\in \R$.
\item $D^{\alpha}(e^t)=e^t.$
\item $D^{\alpha}(\sin t)=\beta\sin t+\alpha\cos t$. 
\item $D^{\alpha}(\log t)=\beta\log t+\frac{\alpha}{t}$, $t>0$.
\end{enumerate}
\end{proposition}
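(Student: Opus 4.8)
The plan is to reduce every item to a single substitution into the connecting formula (**) from Theorem \ref{diff.1}. Each of the four functions $t^r$, $e^t$, $\sin t$, and $\log t$ is differentiable on the stated domain (the last on $(0,\infty)$, which is precisely why the hypothesis $t>0$ is imposed). Consequently Theorem \ref{diff.1} guarantees that each is $\alpha$-differentiable and that its deformable derivative is computed by $D^\alpha f(t)=\beta f(t)+\alpha Df(t)$. Thus no limit from the definition (*) needs to be evaluated directly; I would simply insert the classical derivative $Df$ into this identity.

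Carrying this out term by term: for $f(t)=t^r$ I use $Df(t)=r t^{r-1}$ to obtain $\beta t^r + r\alpha t^{r-1}$; for $f(t)=\sin t$ I use $Df(t)=\cos t$; and for $f(t)=\log t$ I use $Df(t)=1/t$ on $t>0$. Each of these lands on the asserted right-hand side immediately. The only item requiring a small extra remark is the exponential: here $Df(t)=e^t=f(t)$, so (**) yields $\beta e^t + \alpha e^t=(\alpha+\beta)e^t$, and the normalization $\alpha+\beta=1$ collapses this to $e^t$. This also explains why $e^t$ is a fixed function of every $\alpha$-derivative, mirroring its familiar role for the ordinary derivative.

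There is no genuine obstacle in this proposition: it is a routine corollary of Theorem \ref{diff.1}, and its entire content is the bookkeeping of standard derivatives together with the constraint $\alpha+\beta=1$. If anything deserves care, it is only recording the domain restriction $t>0$ in the logarithmic case and observing the cancellation $\alpha+\beta=1$ in the exponential case; everything else follows by direct substitution.
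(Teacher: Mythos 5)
Your proposal is correct and is exactly the paper's approach: the proposition is stated as an immediate consequence of the relation $D^\alpha f(t)=\beta f(t)+\alpha Df(t)$ from Theorem \ref{diff.1}, with the classical derivatives substituted in, and the exponential case collapsing via $\alpha+\beta=1$. Nothing further is needed.
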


It is intuitively clear that the operator $D^\alpha$ is continuous with respect to parameter $\alpha$. However we leave it for reader to prove. Instead we focus on the  geometric realization of the ideal with some examples.  The following figures  depict not only continuity phenomenon but also explain its nature of deforming function to its derivative.  

\begin{figure}[H]
\centering
\begin{subfigure}[b]{0.45\textwidth}
\includegraphics[width=\textwidth]{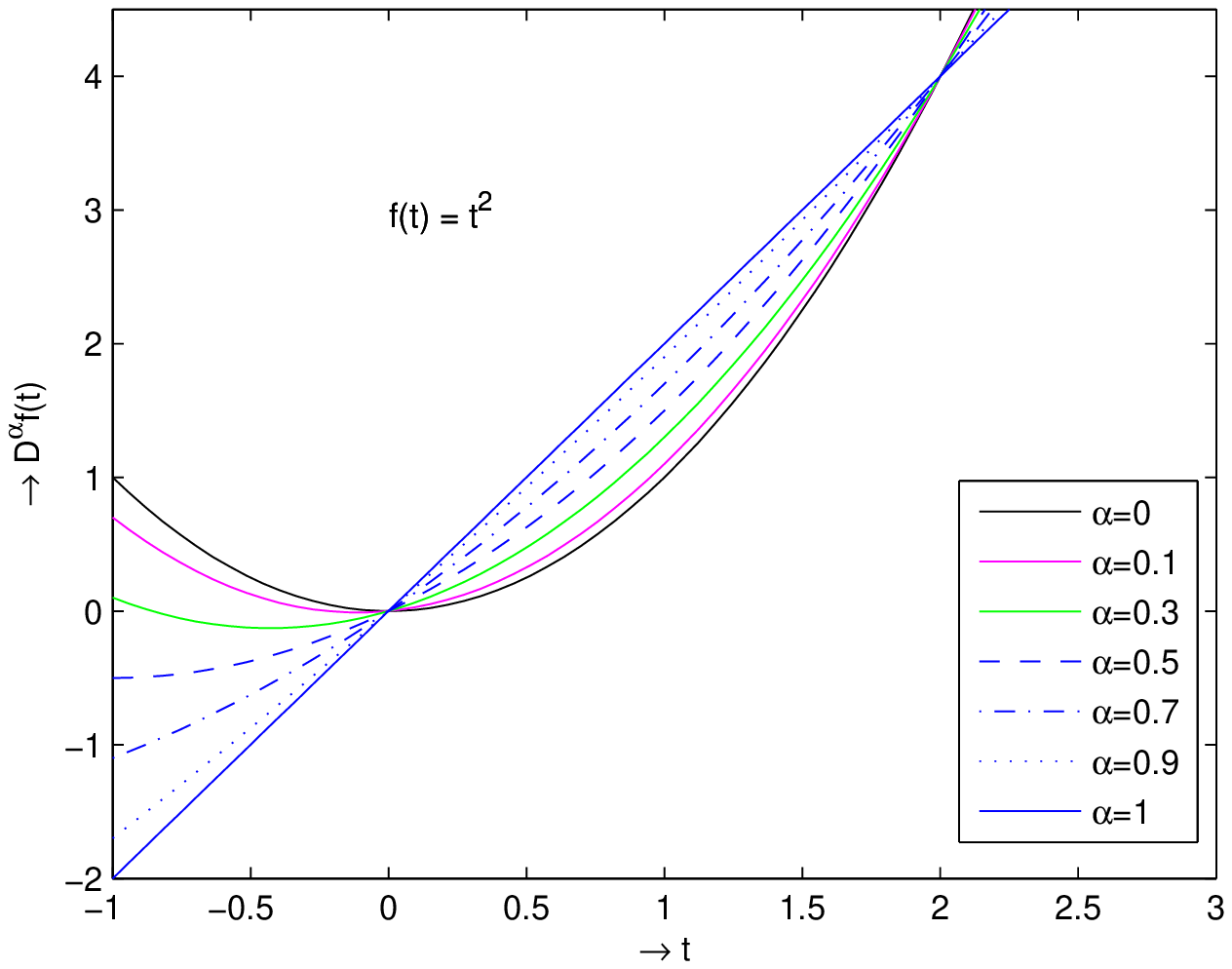}
\caption{$D^\alpha$ operating on  $t^2$}
\label{fig:1}
\end{subfigure}
~ 
\begin{subfigure}[b]{0.45\textwidth}
\includegraphics[width=\textwidth]{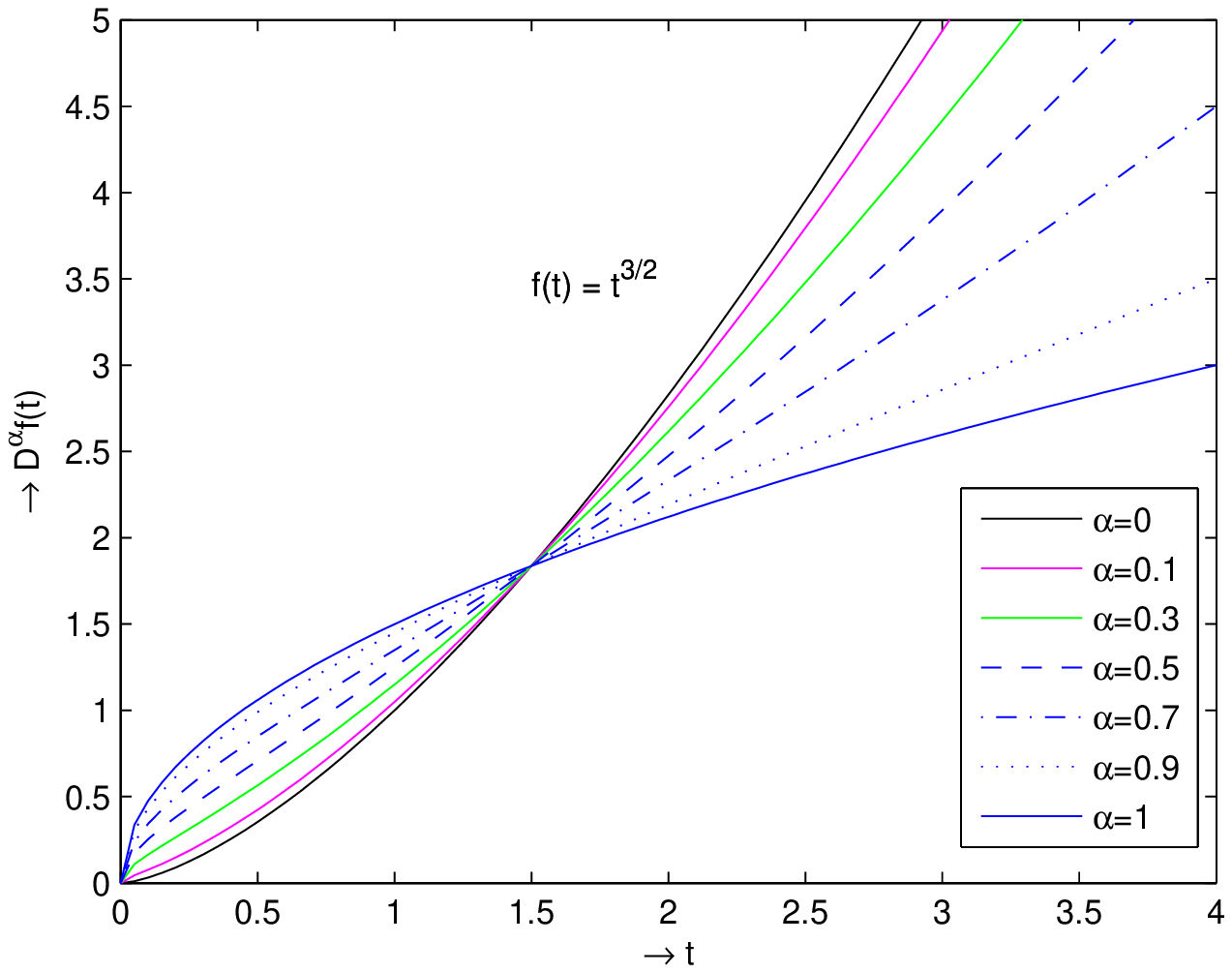}
\caption{$D^\alpha$ operating on  $t^{3/2}$}
\label{fig:2}
\end{subfigure}
    \\
\begin{subfigure}[b]{0.45\textwidth}
\includegraphics[width=\textwidth]{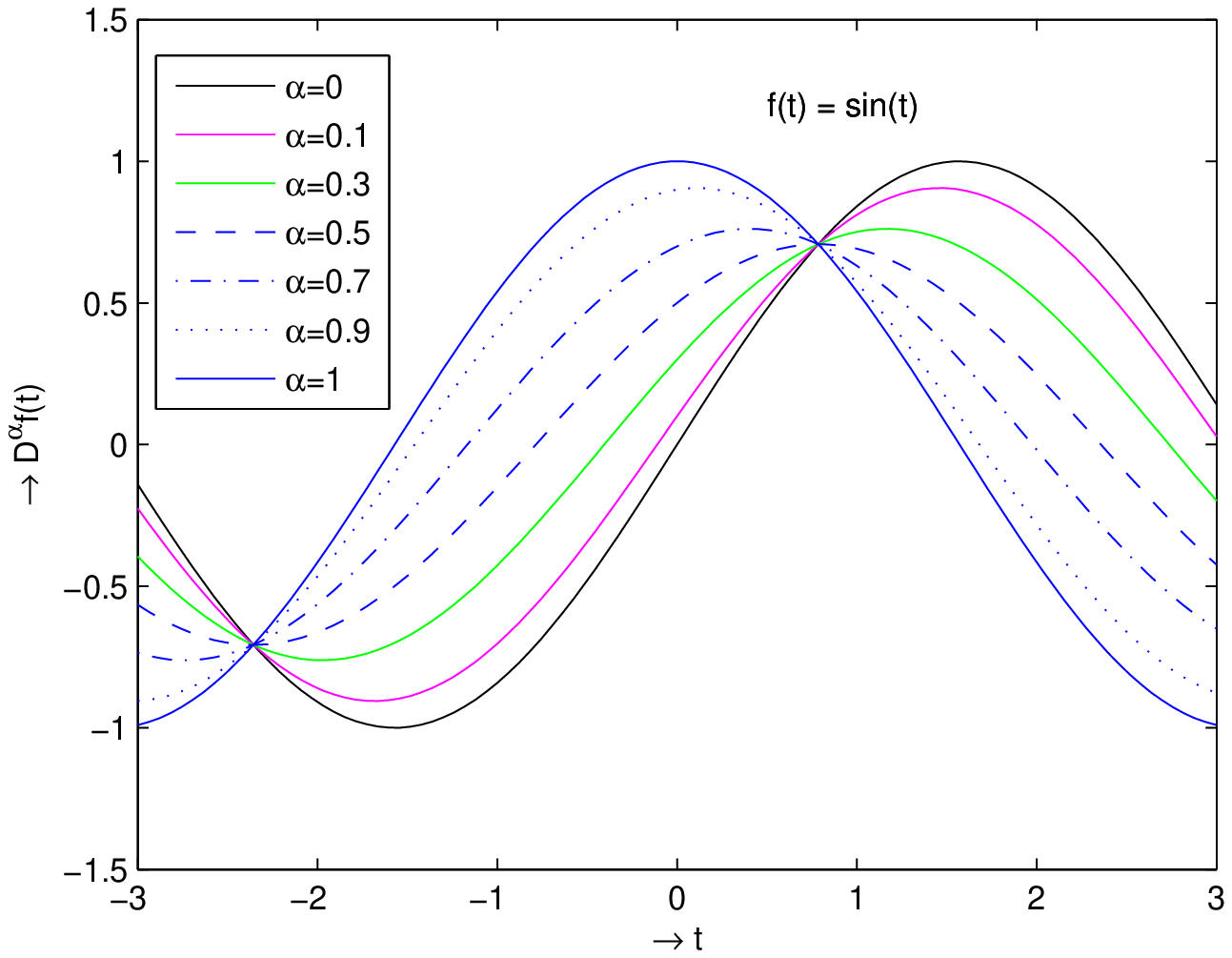}
\caption{$D^\alpha$ operating on  $\sin t$}
\label{fig:3}
\end{subfigure}
~ 
\begin{subfigure}[b]{0.45\textwidth}
\includegraphics[width=\textwidth]{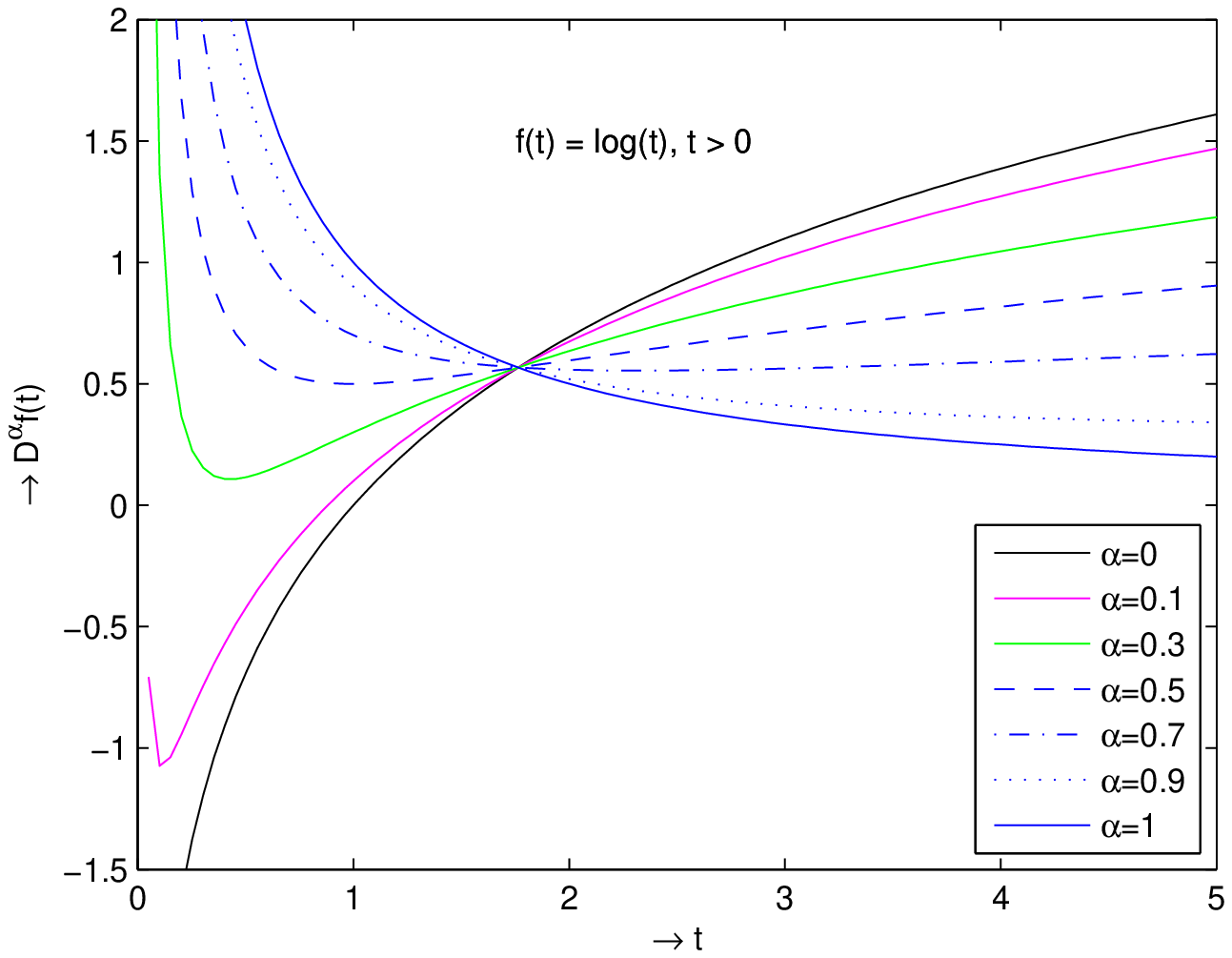}
\caption{$D^\alpha$ operating on  $\log t, t > 0$}
\label{fig:4}
\end{subfigure}
\caption{$D^\alpha$-derivative of different functions, $\alpha \in (0,1)$}
\end{figure}
    
\section{\textbf {Some Useful Theorems on deformable Derivative}}
In this section we extend  $ Rolle's $, Mean Value and Taylor's theorems to deformable  derivative with respect to $\alpha$.

\begin{theorem}\label{Rolle}{\bf\text{(Rolle's theorem on deformable derivative)}}
Let $f:[a,b]\longrightarrow \R$ be a function satisfying:
\begin{itemize}
\item [$($\rm i$)$]	$f$ is continuous on $[a,b]$
\item [$($\rm ii$)$] $f$ is $\alpha$-differentiable in $(a,b)$ 
\item [$($\rm iii$)$] $f(a)=f(b).$
\end{itemize}
Then, there exists a point $c\in (a,b)$ such that $D^\alpha f(c)=\beta f(c)$.
\end{theorem}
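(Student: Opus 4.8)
The plan is to reduce the statement to the classical Rolle's theorem by exploiting the linear relation $D^\alpha f(t) = \beta f(t) + \alpha D f(t)$ established in Theorem \ref{diff.1}. Observe first that the desired conclusion $D^\alpha f(c) = \beta f(c)$ is, by this relation, equivalent to $\alpha D f(c) = 0$; and since we work under the standing assumption $\alpha > 0$, this is in turn equivalent to the familiar condition $D f(c) = 0$. Thus the whole theorem will follow once I produce a point $c \in (a,b)$ at which the ordinary derivative vanishes.

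First I would record that $f$ is in fact ordinarily differentiable on $(a,b)$: by hypothesis (ii) the function $f$ is $\alpha$-differentiable there, and Corollary \ref{diff.3} guarantees that $\alpha$-differentiability implies ordinary differentiability. Combined with the continuity assumption (i) on the closed interval $[a,b]$ and the endpoint condition (iii), this shows that $f$ satisfies all the hypotheses of the classical Rolle's theorem.

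Next I would invoke the classical Rolle's theorem to obtain a point $c \in (a,b)$ with $D f(c) = 0$. Substituting back into the connecting formula (**) then gives $D^\alpha f(c) = \beta f(c) + \alpha \cdot 0 = \beta f(c)$, which is exactly the assertion.

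The proof is essentially a matter of translation, so there is no genuine analytic obstacle; the only point requiring care is checking that the hypotheses transfer cleanly to the classical setting, namely that continuity on $[a,b]$ together with $\alpha$-differentiability on $(a,b)$ really does place us within the scope of the ordinary Rolle's theorem via Corollary \ref{diff.3}. Once the equivalence of the two notions of differentiability is in hand, and once one notes that $\alpha>0$ lets us pass from $\alpha D f(c)=0$ to $D f(c)=0$, the result is immediate.
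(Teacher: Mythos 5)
Your proposal is correct and follows essentially the same route as the paper: both pass to ordinary differentiability via the equivalence of $\alpha$-differentiability and differentiability (Corollary \ref{diff.3} / Theorem \ref{diff.4}), apply the classical Rolle's theorem to get $Df(c)=0$, and substitute into the relation (**) of Theorem \ref{diff.1} to conclude $D^\alpha f(c)=\beta f(c)$.
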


\begin{proof}
By applying corollary \ref{diff.4}, $f$ is differentiable in $(a,b)$. Thus $f$ satisfies all conditions of classical Rolle's theorem. Then there is a point $c\in(a,b)$ such that $D f(c)=0$. Hence using equation (**) of theorem \ref{diff.1}, we have $D^\alpha f(c)=\beta f(c)$.
\end{proof}	

Mean Value theorem is a consequence of Rolle's theorem so is case with deformable derivative. 	

\begin{theorem}\label{meanvalue}
{\bf\text{(Mean Value theorem on deformable derivative)}}
Let $f:[a,b]\longrightarrow R$ be a function satisfying:
\begin{itemize}
\item [$($\rm i$)$] $f$ is continuous on $[a,b]$
\item [$($\rm ii$)$] $f$ is $\alpha$- differentiable in $(a,b)$.  
\end{itemize}
Then, there exists a point $c\in (a,b)$ such that 
\[
D^\alpha f(c)=\beta f(c)+\alpha\frac{f(b)-f(a)}{b-a}.
\]
\end{theorem}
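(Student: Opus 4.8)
The plan is to reduce the statement to the classical Mean Value theorem, exactly paralleling the proof of Rolle's theorem (Theorem \ref{Rolle}) given just above. The engine of the argument is the equivalence between $\alpha$-differentiability and ordinary differentiability, which lets us transport a hypothesis phrased in terms of $D^\alpha$ into the standard setting, and the bridging identity (**) from Theorem \ref{diff.1}, which lets us transport the resulting conclusion back. Since all of the substantive analysis was already carried out in establishing those two facts, the proof here is essentially a translation.

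First I would invoke Corollary \ref{diff.3}: because $f$ is $\alpha$-differentiable throughout $(a,b)$, it is differentiable there in the ordinary sense. Together with hypothesis (i) this places $f$ squarely in the hypotheses of the classical Mean Value theorem, namely continuity on the closed interval $[a,b]$ and differentiability on the open interval $(a,b)$. Applying that classical theorem then furnishes a point $c\in(a,b)$ with
\[
Df(c)=\frac{f(b)-f(a)}{b-a}.
\]
Finally I would rewrite this in terms of the deformable derivative using the relation $D^\alpha f(t)=\beta f(t)+\alpha Df(t)$ from (**). Setting $t=c$ and substituting the value of $Df(c)$ just obtained yields
\[
D^\alpha f(c)=\beta f(c)+\alpha\,\frac{f(b)-f(a)}{b-a},
\]
which is precisely the asserted conclusion.

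The main point worth flagging is that there is essentially no technical obstacle, since the heavy lifting lives in Corollary \ref{diff.3} and in the derivation of (**). The only place demanding a moment of care is matching the hypotheses to those of the classical theorem: the $\alpha$-differentiability assumption is used only on the open interval $(a,b)$, where it upgrades to ordinary differentiability, whereas the continuity on the closed interval $[a,b]$ must be taken directly from hypothesis (i) rather than inferred from $\alpha$-differentiability at the endpoints. Once this bookkeeping is checked, the result follows immediately, and one could even view the Mean Value statement as the special affine case of a Taylor-type expansion for $D^\alpha$.
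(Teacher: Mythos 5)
Your proof is correct, but it takes a genuinely more direct route than the paper's. The paper mimics the classical derivation of the Mean Value theorem from Rolle's theorem, carried out at the level of the deformable derivative: it introduces the auxiliary function $g(t)=f(t)-f(a)-\frac{f(b)-f(a)}{b-a}\,t$, observes that $g(a)=g(b)$ so that its own Theorem \ref{Rolle} applies, obtains a point $c$ with $D^\alpha g(c)=\beta g(c)$, and then unwinds that identity (via linearity of $D^\alpha$ and the relation (**)) to reach the stated formula. You instead descend to the classical setting immediately: Corollary \ref{diff.3} upgrades $\alpha$-differentiability to ordinary differentiability on $(a,b)$, the classical Mean Value theorem yields $c$ with $Df(c)=\frac{f(b)-f(a)}{b-a}$, and a single substitution into (**) finishes the argument. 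Both arguments rest on the same two pillars, the equivalence of the two notions of differentiability and the bridge formula (**), so neither is deeper; but the paper's route stays internal to the deformable calculus and showcases its Rolle's theorem as a reusable tool (the same template it employs again for Taylor's theorem), whereas your route buys brevity and transparency, avoiding the auxiliary function entirely and making plain that the deformable Mean Value theorem is a re-dressing of the classical one. Your remark about the bookkeeping of hypotheses, continuity taken on all of $[a,b]$ from (i) while $\alpha$-differentiability is used only on the open interval, is exactly the right point to flag.
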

	
\begin{proof}
We consider a function $g$ defined by:
\[
g(x)=f(t)-f(a)-\frac{f(b)-f(a)}{b-a}t.
\]
Notice that $g(t)$ satisfies all the conditions of Rolle's Theorem \ref{Rolle}, there exists $c\in(a,b)$ such that $D^\alpha g(c)=\beta g(c).$ This yields the desired expression in the theorem.
\end{proof} 
	
\begin{theorem}\label{Taylor}
{\bf\text{(Taylor's theorem)}}	
Suppose $f$ is $n$-times $\alpha$-differentiable such that all $\alpha$-derivatives are continuous on $[a,a+h].$  Then
\[
f(a+h)=\sum\limits_{k=0}^{n-1}\frac{h^{k}}{k!\alpha^{k}}\left(D^{\alpha}_{k}f(a)-\beta\frac{(1-\theta)^{k-n+1}h}{\alpha n}D^{\alpha}_{k}f(a+\theta h)\right)+\frac{h^{n}}{n!\alpha }D^{\alpha}_{n}f(a+\theta h)
\]
where $D_k^\alpha= D^\alpha D^\alpha\cdots D^\alpha$, $(k$-times$)$,\; $0<\theta<1$.
\end{theorem}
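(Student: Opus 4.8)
The plan is to reduce the statement to the classical Taylor theorem by means of an integrating-factor substitution that turns the iterated operator $D^{\alpha}$ into an ordinary derivative. First, since $f$ is assumed $n$-times $\alpha$-differentiable with continuous $\alpha$-derivatives, I would iterate Corollary \ref{diff.3}: $\alpha$-differentiability of $f$ forces ordinary differentiability, and because $D^{\alpha}f=\beta f+\alpha Df$ by Theorem \ref{diff.1}, $\alpha$-differentiability of $D^{\alpha}f$ in turn forces $Df$ to be differentiable, and so on. Thus $f$ is $n$-times continuously differentiable in the ordinary sense, and the classical Taylor theorem with Lagrange remainder is available on $[a,a+h]$.

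The key observation is a factorisation of the operator. Rewriting $(**)$ gives
\[
D^{\alpha}f(t)=\alpha\, e^{-\frac{\beta}{\alpha}t}\,\frac{d}{dt}\!\left(e^{\frac{\beta}{\alpha}t}f(t)\right),
\]
which one checks at once from $D^{\alpha}f=\beta f+\alpha Df$. Introducing the auxiliary function $g(t)=e^{\frac{\beta}{\alpha}t}f(t)$, I would prove by induction on $k$ that
\[
D^{\alpha}_{k}f(t)=\alpha^{k}\,e^{-\frac{\beta}{\alpha}t}\,g^{(k)}(t),
\qquad\text{equivalently}\qquad
g^{(k)}(t)=\alpha^{-k}\,e^{\frac{\beta}{\alpha}t}\,D^{\alpha}_{k}f(t).
\]
The inductive step is immediate: applying the factorisation to $D^{\alpha}_{k}f=\alpha^{k}e^{-\frac{\beta}{\alpha}t}g^{(k)}$, the two exponentials cancel inside the derivative and leave $\alpha^{k+1}e^{-\frac{\beta}{\alpha}t}g^{(k+1)}$; the base case $k=0$ is just $g=e^{\frac{\beta}{\alpha}t}f$, so $D^{\alpha}_{0}f=f$ as required.

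With these identities in hand, the third step is to apply the ordinary Taylor theorem to the smooth function $g$ on $[a,a+h]$, obtaining $g(a+h)=\sum_{k=0}^{n-1}\frac{h^{k}}{k!}g^{(k)}(a)+\frac{h^{n}}{n!}g^{(n)}(a+\theta h)$ for some $\theta\in(0,1)$, then to substitute $g^{(k)}=\alpha^{-k}e^{\frac{\beta}{\alpha}t}D^{\alpha}_{k}f$ and divide through by $e^{\frac{\beta}{\alpha}(a+h)}$ to solve for $f(a+h)$ purely in terms of the iterated deformable derivatives $D^{\alpha}_{k}f$ evaluated at $a$ and at $a+\theta h$.

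The main obstacle I anticipate is the final reconciliation. The substitution naturally leaves behind exponential weights, an $e^{-\frac{\beta}{\alpha}h}$ on each summand at $a$ and an $e^{-\frac{\beta}{\alpha}(1-\theta)h}$ on the remainder, together with a single shared mean-value point $\theta$. Matching this against the closed form in the statement — in particular accounting for the $\beta$-correction terms carrying $D^{\alpha}_{k}f(a+\theta h)$ and the factor $(1-\theta)^{\,k-n+1}$ — is exactly the delicate bookkeeping step: one must either expand these exponential factors to the appropriate order and collect terms, or justify a rearrangement of the remainder that redistributes the $\beta$-dependence across the sum. This is where I would expect the argument to require the most care, and where the precise form of the claimed expression would have to be pinned down.
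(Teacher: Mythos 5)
Your conjugation machinery is correct as far as it goes: the identity $D^{\alpha}f=\alpha e^{-\frac{\beta}{\alpha}t}\frac{d}{dt}\bigl(e^{\frac{\beta}{\alpha}t}f\bigr)$ checks out, the induction giving $D^{\alpha}_{k}f=\alpha^{k}e^{-\frac{\beta}{\alpha}t}g^{(k)}$ with $g=e^{\frac{\beta}{\alpha}t}f$ is sound, and applying the classical Taylor theorem to $g$ and dividing by $e^{\frac{\beta}{\alpha}(a+h)}$ yields the exact identity
\[
f(a+h)=e^{-\frac{\beta}{\alpha}h}\sum_{k=0}^{n-1}\frac{h^{k}}{k!\,\alpha^{k}}\,D^{\alpha}_{k}f(a)+\frac{h^{n}}{n!\,\alpha^{n}}\,e^{-\frac{\beta}{\alpha}(1-\theta)h}\,D^{\alpha}_{n}f(a+\theta h)
\]
for some $\theta\in(0,1)$. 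But this is a different theorem from the one stated, and the ``reconciliation'' you defer to the end is not bookkeeping --- it is the entire content, and it cannot be carried out. The stated formula contains, for every $k<n$, a correction term $\beta\frac{(1-\theta)^{k-n+1}h}{\alpha n}D^{\alpha}_{k}f(a+\theta h)$ evaluated at the intermediate point, whereas your formula involves no value of $D^{\alpha}_{k}f$ at $a+\theta h$ except $k=n$ and instead carries exponential weights. Moreover your $\theta$ is the Lagrange mean-value point of $g$, while the $\theta$ in the statement arises from a completely different construction; since both results are existence statements about different expressions, neither can be algebraically rearranged into the other. Expanding $e^{-\frac{\beta}{\alpha}h}$ ``to the appropriate order'' replaces an exact identity by a truncation with nonzero error, which is precisely what the $\beta$-correction terms at $a+\theta h$ in the statement encode; no finite collection of such expansions recovers the claimed exact formula.

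The missing idea is the paper's: define the auxiliary function
\[
\phi(t)=\sum_{k=0}^{n-1}\frac{(a+h-t)^{k}}{k!\,\alpha^{k}}\,D^{\alpha}_{k}f(t)+\frac{A}{n!\,\alpha^{n}}(a+h-t)^{n},
\]
choose the constant $A$ so that $\phi(a)=\phi(a+h)$, and apply the deformable Rolle's theorem (Theorem \ref{Rolle}) to $\phi$. Its conclusion is $D^{\alpha}\phi(a+\theta h)=\beta\,\phi(a+\theta h)$ --- not $\phi'=0$ --- and it is exactly the nonzero right-hand side $\beta\,\phi(a+\theta h)$, a sum over all $k<n$ of terms $\frac{(1-\theta)^{k}h^{k}}{k!\,\alpha^{k}}D^{\alpha}_{k}f(a+\theta h)$, that produces the $\beta\,(1-\theta)^{k-n+1}$ corrections in the statement once one solves for $A$; the product rule $D^{\alpha}(uv)=(D^{\alpha}u)v+\alpha u\,Dv$ from Theorem \ref{properties.1}(d) telescopes the sum defining $D^{\alpha}\phi$. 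Your route has no access to this structure because you eliminated $\beta$ entirely through the exponential substitution. (As a side remark, your formula is a legitimate and arguably cleaner Taylor-type theorem for $D^{\alpha}$, and its remainder $\frac{h^{n}}{n!\,\alpha^{n}}$ even suggests that the factor $\frac{h^{n}}{n!\,\alpha}$ in the paper's statement is a typo for $\frac{h^{n}}{n!\,\alpha^{n}}$ --- but as a proof of the statement as given, the proposal has a genuine gap.)
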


\begin{proof}
Consider a function $\phi$ defined by:
\begin{equation}
\phi(t)=\sum\limits_{k=0}^{n-1}\frac{(a+h-t)^k}{k!\alpha^k}D^{\alpha}_k f(t) + \frac{A}{n!\alpha^{n}}(a+h-t)^{n},
\end{equation}
where $A$ is a constant to be chosen $A$ such that $\phi(a+h)=\phi(a)$. This yields 
\begin{equation}
\frac{A}{n!\alpha^{n}}h^{n}=f(a+h)-\sum\limits_{k=0}^{n-1} \frac{h^{k}}{k!\alpha^{k}}D^{\alpha}_{k}f(a)
\end{equation}
Now by hypothesis, $\phi$ is  $\alpha$-differentiable in $(a,a+h)$. Using part (d) of theorem \ref{properties.1}, the $\alpha$-derivative $D^{\alpha}\phi$ is given by
\begin{equation}
D^{\alpha}\phi(t)=\frac{(a+h-t)^{n-1}}{\alpha^{n-1}(n-1)!}D^{\alpha}_{n}f(t)+\frac{A}{\alpha^{n}n!}\left(\beta(a+h-t)^{n}-\alpha n(a+h-t)^{n-1}\right)
\end{equation}
Hence $\phi$ satisfies all the conditions of Rolle's Theorem \ref{Rolle}. So there is some $\theta\in(0,1)$ such that 
\[
D^{\alpha}\phi(a+\theta h)=\beta\phi(a+\theta h).
\]
Using equations ($1$), ($2$) and ($3$), we have 
\[
f(a+h)=\sum\limits_{k=0}^{n-1}\frac{h^{k}}{k!\alpha^{k}}\left(D^{\alpha}_{k}f(a)-\beta\frac{(1-\theta)^{k-n+1}h}{\alpha n}D^{\alpha}_{k}f(a+\theta h)\right)+\frac{h^{n}}{n!\alpha }D^{\alpha}_{n}f(a+\theta h)
\]
This completes the theorem.
\end{proof}

\section{\textbf{Fractional Integral}}
\noindent Fractional integral, being an inverse operator to fractional derivative, plays equally important role as that of fractional derivative in the field of fractional calculus. The section defines a  fractional integral as an inverse operator for deformable derivative. Some basic properties of this fractional integral are also discussed. All functions considered in this section are assumed to be continuous.\\

\begin{definition} 
Let $f$ be a continuous function defined on $[a,b]$. We define {\it $\alpha$-fractional Integral} of $f,~$ denoted by $I^{\alpha}_{a}f,$ by the integral
\begin{equation*}
I^{\alpha}_{a}f(t)=\frac{1}{\alpha}e^{\frac{-\beta }{\alpha}t}\int_{a}^{t}e^{\frac{\beta }{\alpha}x}f(x)dx,\quad\mbox{where}\;\alpha+\beta=1,\;\alpha\in(0,1].\tag{\#}    
\end{equation*}
\end{definition}

Some basic properties of this fractional integral are contained in next theorem.

\begin{theorem}\label{properties.2}
The operator $I^{\alpha}_a$ possesses the following properties:
\begin{itemize}
\item [$($\rm a$)$] Linearity: $I^{\alpha}_{a}\left(b f+c g\right)=b I^{\alpha}_{a}f+c I^{\alpha}_{a}g.$
\item [$($\rm b$)$]  Commutativity: 
$I_{a}^{\alpha_1}I_{a}^{\alpha_2}=I_{a}^{\alpha_2}I_{a}^{\alpha_1}$,
 where   $\alpha_i+\beta_i=1$,\;$i=1,2.$
\end{itemize}
\end{theorem}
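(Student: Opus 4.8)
The plan is to dispatch part (a) in one line and concentrate on the commutativity in part (b). Linearity is immediate: the front factor $\tfrac{1}{\alpha}e^{-\beta t/\alpha}$ in (\#) does not involve the integrand, and the Riemann integral $\int_a^t e^{\beta x/\alpha}(\,\cdot\,)\,dx$ is itself linear, so $I^{\alpha}_{a}(bf+cg)=bI^{\alpha}_{a}f+cI^{\alpha}_{a}g$ follows directly from the definition.

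For part (b) the idea is to compute the iterated operator $I_a^{\alpha_1}I_a^{\alpha_2}f$ explicitly and exhibit a symmetric integral kernel. First I would abbreviate $c_i=\beta_i/\alpha_i$ for $i=1,2$ and substitute (\#) twice, producing a double integral over the triangle $a\le y\le x\le t$:
\[
I_a^{\alpha_1}I_a^{\alpha_2}f(t)=\frac{1}{\alpha_1\alpha_2}\,e^{-c_1 t}\int_a^t e^{(c_1-c_2)x}\left(\int_a^x e^{c_2 y}f(y)\,dy\right)dx.
\]
The key step is to interchange the order of integration; this is legitimate because $f$ is continuous (as assumed throughout the section), so the integrand is continuous on the compact triangle and Fubini applies. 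After the swap, $f(y)$ ranges over $[a,t]$ while the remaining inner integral is the elementary $\int_y^t e^{(c_1-c_2)x}\,dx$ in the $x$-variable.

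The main point is then to evaluate that inner integral and collect exponentials. Since $c_i=\tfrac1{\alpha_i}-1$ is strictly monotone in $\alpha_i$, the case $\alpha_1=\alpha_2$ gives commutativity trivially, so I may assume $c_1\ne c_2$ and integrate to get $\frac{e^{(c_1-c_2)t}-e^{(c_1-c_2)y}}{c_1-c_2}$. Absorbing the front factor $e^{-c_1 t}$ and the $e^{c_2 y}$ from the kernel, the two exponentials collapse to $e^{c_2(y-t)}$ and $e^{c_1(y-t)}$ respectively, yielding
\[
I_a^{\alpha_1}I_a^{\alpha_2}f(t)=\frac{1}{\alpha_1\alpha_2}\int_a^t \frac{e^{c_2(y-t)}-e^{c_1(y-t)}}{c_1-c_2}\,f(y)\,dy.
\]
The kernel $\dfrac{e^{c_2(y-t)}-e^{c_1(y-t)}}{c_1-c_2}$ is invariant under the exchange $c_1\leftrightarrow c_2$, since numerator and denominator both reverse sign, and the prefactor $\tfrac{1}{\alpha_1\alpha_2}$ is symmetric as well; hence the identical formula results for $I_a^{\alpha_2}I_a^{\alpha_1}f(t)$, which proves commutativity.

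I expect the only genuine subtlety to be the Fubini interchange, especially keeping the triangular limits straight (from $a\le y\le x\le t$ one passes to $y\in[a,t]$ and $x\in[y,t]$), together with the bookkeeping of exponents once $e^{-c_1 t}$ is absorbed; everything else is routine. An alternative, more structural route would be to invoke the fact that $I^{\alpha}_a$ inverts $D^{\alpha}$ and deduce commutativity from part (b) of Theorem \ref{properties.1}, but that would require separately controlling the constant and boundary terms appearing in the inversion, so the direct kernel computation above is cleaner and self-contained.
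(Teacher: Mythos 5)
Your proof is correct and follows essentially the same route as the paper's: expand the iterated integral, interchange the order of integration over the triangle $a\le y\le x\le t$, evaluate the inner exponential integral, and conclude by symmetry (the paper packages the result as $\frac{1}{\beta_1\alpha_2-\beta_2\alpha_1}\left(\alpha_2 I_a^{\alpha_2}-\alpha_1 I_a^{\alpha_1}\right)f$ rather than as a symmetric kernel, but it is the same computation). A minor point in your favor: you explicitly dispose of the degenerate case $\alpha_1=\alpha_2$, which the paper's division by $\beta_1\alpha_2-\beta_2\alpha_1=\alpha_2-\alpha_1$ tacitly requires to be nonzero.
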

\begin{proof}
Linearity readily follows from definition (\#). For commutativity, we consider
\begin{eqnarray*}
I_{a}^{\alpha_{1}}I_{a}^{\alpha_{2}}f(t) &=& I_{a}^{\alpha_{1}}\left(\frac{1}{\alpha_{2}}e^{-\frac{\beta_{2}}{\alpha_{2}}t}\int\limits_{a}^{t}e^{\frac{\beta_{2}}{\alpha_{2}}\theta}f(\theta)d\theta\right)\\ 
&=& \frac{1}{\alpha_{1}}e^{-\frac{\beta_{1}}{\alpha_{1}}t}\int\limits_{a}^{t}e^{\frac{\beta_{1}}{\alpha_{1}}x}\left(\frac{1}{\alpha_{2}}e^{-\frac{\beta_{2}}{\alpha_{2}}x}\int\limits_{a}^{x}e^{\frac{\beta_{2}}{\alpha_{2}}\theta}f(\theta)d\theta\right)dx\\
&=& \frac{1}{\alpha_{1}\alpha_{2}}e^{-\frac{\beta_{1}}{\alpha_{1}}t}\int\limits_{a}^{t}\int\limits_{a}^{x}e^{(\frac{\beta_{1}}{\alpha_{1}}-\frac{\beta_{2}}{\alpha_{2}})x}e^{\frac{\beta_{2}}{\alpha_{2}}\theta}f(\theta)\; d\theta dx\\
&=& \frac{1}{\alpha_{1}\alpha_{2}}e^{-\frac{\beta_{1}}{\alpha_{1}}t}\int\limits_{a}^{t}\int\limits_{\theta}^{t}e^{(\frac{\beta_{1}}{\alpha_{1}}-\frac{\beta_{2}}{\alpha_{2}})x}e^{\frac{\beta_{2}}{\alpha_{2}}\theta}f(\theta)\;dx d\theta\\
&=& \frac{1}{\alpha_{1}\alpha_{2}}e^{-\frac{\beta_{1}}{\alpha_{1}}t}\int\limits_{a}^{t}e^{\frac{\beta_{2}}{\alpha_{2}}\theta}f(\theta)\left(\int\limits_{\theta}^{t}e^{(\frac{\beta_{1}}{\alpha_{1}}-\frac{\beta_{2}}{\alpha_{2}})x}dx\right)d\theta\\
&=&\frac{1}{\beta_{1}\alpha_{2}-\beta_{2}\alpha_{1}}\left(e^{-\frac{\beta_{2}}{\alpha_{2}}t}\int\limits_{a}^{t}e^{\frac{\beta_{2}}{\alpha_{2}}\theta}f(\theta)d\theta-e^{-\frac{\beta_{1}}{\alpha_{1}}t}\int\limits_{a}^{t}e^{\frac{\beta_{1}}{\alpha_{1}}\theta}f(\theta)d\theta\right)\\
&=& \frac{1}{\beta_{1}\alpha_{2}-\beta_{2}\alpha_{1}}\left(\alpha_{2}I^{\alpha_{2}}_{a}-\alpha_{1}I^{\alpha_{1}}_{a}\right)f(t)
\end{eqnarray*}
Interchanging the role of $\alpha_1$ and $\alpha_2$,  we have
\[
I_{a}^{\alpha_{2}}I_{a}^{\alpha_{1}}f(t)=			\frac{1}{\beta_{2}\alpha_{1}-\beta_{1}\alpha_{2}}\left(\alpha_{1}I^{\alpha_{1}}_{a}-\alpha_{2}I^{\alpha_{2}}_{a}\right)f(t)
=I_{a}^{\alpha_{1}}I_{a}^{\alpha_{
2}}f(t)
\]
This completes the proof
\end{proof}

The next theorem is a version of fundamental theorem of calculus that roughly says that fraction integral $I_a^\alpha$ is an inverse operation of $\alpha$-differentiation $D^\alpha$.

\begin{theorem}\label{FTC}
{\bf\text{(Inverse Property)}} Let $f$ be a continuous function defined on $[a,b].$ Then  $I_{a}^{\alpha}f$ is $\alpha$-differentiable in $(a,b)$. In fact, we have $D^{\alpha}\left(I_{a}^{\alpha}f(x)\right)=f(x).$ Conversely Suppose $g$ is a continuous anti-$\alpha$-derivative of $f$ over $(a,b)$, that is $g=D^\alpha f$. Then we have
\[
I^{\alpha}_{a}\left(D^\alpha f(t)\right)=I^{\alpha}_{a}\left(g(t)\right)=f(t)-e^{\frac{\beta}{\alpha}(a-t)}f(a).
\]
\end{theorem}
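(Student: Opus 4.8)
The plan is to treat the two assertions separately, in each case reducing the deformable operators to ordinary calculus by means of the identity $D^{\alpha}=\beta I+\alpha D$ from Theorem \ref{diff.1} together with the classical fundamental theorem of calculus.

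For the first (inverse) direction I would set $F(t)=I_{a}^{\alpha}f(t)=\frac{1}{\alpha}e^{-\frac{\beta}{\alpha}t}\int_{a}^{t}e^{\frac{\beta}{\alpha}x}f(x)\,dx$ and first observe that $F$ is ordinarily differentiable: the factor $e^{-\frac{\beta}{\alpha}t}$ is smooth and, since $f$ is continuous, the integral $\int_{a}^{t}e^{\frac{\beta}{\alpha}x}f(x)\,dx$ is differentiable with derivative $e^{\frac{\beta}{\alpha}t}f(t)$ by the classical theorem. Consequently Theorem \ref{diff.1} guarantees that $F$ is $\alpha$-differentiable, with $D^{\alpha}F=\beta F+\alpha D F$. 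Computing $DF$ by the product rule, the two exponentials in the integral term cancel and one finds $DF(t)=-\frac{\beta}{\alpha}F(t)+\frac{1}{\alpha}f(t)$. Substituting this into $D^{\alpha}F=\beta F+\alpha DF$, the two $\beta F$ terms cancel and exactly $f(t)$ survives, which gives $D^{\alpha}\bigl(I_{a}^{\alpha}f\bigr)=f$.

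For the converse direction the key observation is that the integrand appearing in $I_{a}^{\alpha}(D^{\alpha}f)$ is an exact derivative. Writing $D^{\alpha}f(x)=\beta f(x)+\alpha f'(x)$ and multiplying by $e^{\frac{\beta}{\alpha}x}$, I would recognize that $e^{\frac{\beta}{\alpha}x}\bigl(\beta f(x)+\alpha f'(x)\bigr)=\alpha\frac{d}{dx}\!\left[e^{\frac{\beta}{\alpha}x}f(x)\right]$, since differentiating the exponential supplies precisely the missing $\frac{\beta}{\alpha}$ factor. Integrating from $a$ to $t$ therefore collapses to the boundary evaluation $\alpha\bigl(e^{\frac{\beta}{\alpha}t}f(t)-e^{\frac{\beta}{\alpha}a}f(a)\bigr)$; multiplying by the prefactor $\frac{1}{\alpha}e^{-\frac{\beta}{\alpha}t}$ cancels the $\alpha$ and redistributes the exponential to yield $f(t)-e^{\frac{\beta}{\alpha}(a-t)}f(a)$, exactly as claimed.

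Neither direction poses a genuine obstacle; the entire content is that the relation $D^{\alpha}=\beta I+\alpha D$ turns each statement into a routine first-order linear computation. The one point requiring care is the logical order in the first part: one must verify ordinary differentiability of $I_{a}^{\alpha}f$ \emph{before} invoking $D^{\alpha}=\beta I+\alpha D$, because that formula was established in Theorem \ref{diff.1} only under the hypothesis of differentiability. I also note, for clarity, that in the statement the object being integrated is $g=D^{\alpha}f$ in accordance with the displayed formula $I_{a}^{\alpha}(D^{\alpha}f)$, so it is this quantity that the converse computes.
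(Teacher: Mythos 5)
Your proposal is correct and follows essentially the same route as the paper: both parts reduce everything to the identity $D^{\alpha}=\beta I+\alpha D$ plus ordinary calculus, and your converse computation (recognizing $e^{\frac{\beta}{\alpha}x}\left(\beta f(x)+\alpha f'(x)\right)$ as $\alpha\frac{d}{dx}\left[e^{\frac{\beta}{\alpha}x}f(x)\right]$) is the same integration by parts the paper performs, just phrased as an exact derivative. If anything, your first part is slightly more careful than the paper's, which deduces $\alpha$-differentiability of $I_{a}^{\alpha}f$ from Theorem \ref{diff.4} and then cites the standard solution formula for $\alpha Dg+\beta g=f$, without spelling out that ordinary differentiability of $I_{a}^{\alpha}f$ comes from the classical fundamental theorem of calculus and the product rule, as you do.
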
	

\begin{proof} 
Since $f$ is given to be continuous so in view of theorem \ref{diff.4}, $I^\alpha_a f$ is $\alpha$-differentiable. If we set $g=I^\alpha_a f$ then we have
\[
D^{\alpha}\left(I_{a}^{\alpha}f(t)\right)=D^\alpha g(t)=\alpha Dg(t)+\beta g(t)
\]
We know that a particular solution of the differential equation: $\alpha Dg+\beta g=f$ is given as 
\[
g(t)=\frac{1}{\alpha}e^{\frac{-\beta }{\alpha}t}\int_{a}^{t}e^{\frac{\beta }{\alpha}x}f(x)dx.
\]
Thus first part of theorem is complete. For the second part, we have 
\begin{eqnarray*}
g(t)&=& D^\alpha f(t)=\alpha Df(t)+\beta f(t)\\
\Rightarrow\quad I^\alpha_a g(t) &=& \alpha I^\alpha_a\left( Df(t) \right)+\beta I^\alpha_a f(t)\\
&=& e^{\frac{-\beta }{\alpha}t} \int_{a}^{t}e^{\frac{\beta }{\alpha}x}f^\prime(x)dx +\beta I^\alpha_a f(t)\\
&=& e^{\frac{-\beta }{\alpha}t}\left( \left[e^{\frac{\beta }{\alpha}x} f(x) \right]_a^t-\frac{\beta}{\alpha}\int_{a}^{t}e^{\frac{\beta }{\alpha}x}f(x)dx\right)
+\beta I^\alpha_a f(t)\\
\Rightarrow\quad I^\alpha_a g(t) &=& f(t)- e^{\frac{\beta }{\alpha}(a-t)}f(a).
\end{eqnarray*}
This completes the second part.
\end{proof}

We end up the section with a list of fractional integrals of some elementary functions in the following proposition and leave their verification for reader.

\begin{proposition}$~$
\begin{enumerate}
\item $I^{\alpha}_a\sin  t=\frac{1}{\alpha^2+\beta^2}\left(\beta \sin t-\alpha\cos t + e^{\frac{\beta}{\alpha}(a-t)} \left(\alpha\cos a -\beta\sin a\right)\right)$
\item $I^{\alpha}_a e^{t}= \left(e^{t} - e^{\frac{(a-\beta t)}{\alpha}}\right)$
\item $I^{\alpha}_a\lambda =\frac{\lambda}{\beta} \left(1-e^{\frac{\beta}{\alpha}(a-t)}\right)$where $\lambda$ is a constant.
\item $I^{\alpha}_0 t^n=\frac{1}{\beta}\left(\sum_{k=0}^{n}\frac{(-1)^k\; n!}{(n-k)!}\left(\frac{\alpha}{\beta}\right)^k t^{n-k} + (-1)^{n+1}\; n!\left(\frac{\alpha}{\beta}\right)^n e^{-\frac{\beta}{\alpha}t}\right)$
\end{enumerate}
\end{proposition}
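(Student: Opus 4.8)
The plan is to verify each identity by direct substitution into the defining formula (\#), so that everything reduces to evaluating one elementary integral. Writing $c=\beta/\alpha$ for brevity, each item amounts to computing $\int_a^t e^{cx}f(x)\,dx$, multiplying by $\frac{1}{\alpha}e^{-ct}$, and simplifying with the constraint $\alpha+\beta=1$, which produces convenient collapses such as $c+1=1/\alpha$ and $\alpha c=\beta$. Since the arguments are parallel, I would state the common reduction once and then treat the four integrands in turn, from easiest to hardest.

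Items (2) and (3) require almost no work. For $f(x)=e^x$ the integrand is the single exponential $e^{(c+1)x}$, and because $c+1=1/\alpha$ its antiderivative is $\alpha e^{x/\alpha}$; multiplying by $\frac{1}{\alpha}e^{-ct}$ and rewriting the boundary exponent $(c+1)a-ct$ as $(a-\beta t)/\alpha$ via $\alpha+\beta=1$ lands on $e^t-e^{(a-\beta t)/\alpha}$. For the constant $\lambda$ the integrand is $\lambda e^{cx}$, whose integral is $\frac{\lambda}{c}(e^{ct}-e^{ca})$; since $\alpha c=\beta$ this gives $\frac{\lambda}{\beta}(1-e^{c(a-t)})$ immediately. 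For item (1) I would compute $\int_a^t e^{cx}\sin x\,dx$ either by integrating by parts twice or, more cleanly, by taking the imaginary part of $\int_a^t e^{(c+i)x}\,dx$. This yields a factor $c^2+1$ in the denominator, and the identity $\alpha(c^2+1)=(\alpha^2+\beta^2)/\alpha$ is exactly what converts $\frac{1}{\alpha(c^2+1)}$ into $\frac{\alpha}{\alpha^2+\beta^2}$; clearing the remaining $\alpha$'s coming from $c=\beta/\alpha$ inside the brackets then reproduces the stated $\beta\sin t-\alpha\cos t$ interior and the $\alpha\cos a-\beta\sin a$ boundary term.

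The real work is item (4), and this is where I expect the only genuine obstacle. The key lemma is the closed form $\int_0^t x^n e^{cx}\,dx = e^{ct}\sum_{k=0}^n \frac{(-1)^k n!}{(n-k)!}\frac{t^{n-k}}{c^{k+1}} - \frac{(-1)^n n!}{c^{n+1}}$, which I would establish by induction on $n$, equivalently by iterating the reduction $\int x^n e^{cx} = \frac{x^n e^{cx}}{c}-\frac{n}{c}\int x^{n-1}e^{cx}$. The boundary contribution at $x=0$ is the delicate point: every summand with $k<n$ carries a positive power of $x$ and so vanishes there, leaving only the $k=n$ term $\frac{(-1)^n n!}{c^{n+1}}$. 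Multiplying through by $\frac{1}{\alpha}e^{-ct}$ and using $\frac{1}{\alpha c^{k+1}}=\frac{1}{\beta}(\alpha/\beta)^k$ rewrites the sum as $\frac{1}{\beta}\sum_{k=0}^n \frac{(-1)^k n!}{(n-k)!}(\alpha/\beta)^k t^{n-k}$, while the isolated boundary term becomes $\frac{(-1)^{n+1}n!}{\beta}(\alpha/\beta)^n e^{-\beta t/\alpha}$. The only care needed is the bookkeeping of the powers $(\alpha/\beta)^k$ and the sign from $-(-1)^n=(-1)^{n+1}$; once these are tracked, the expression matches the claimed formula and the proposition follows.
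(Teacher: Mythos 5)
Your verification is correct in all four items. Note, however, that the paper gives no proof at all here: it explicitly ``leave[s] their verification for reader,'' so there is no argument of the authors to compare against, and your direct substitution into the defining integral (\#) is exactly the intended check. The algebraic collapses you rely on ($c+1=1/\alpha$, $\alpha c=\beta$, and $\frac{1}{\alpha c^{k+1}}=\frac{1}{\beta}(\alpha/\beta)^{k}$ with $c=\beta/\alpha$) are all valid consequences of $\alpha+\beta=1$, and your key lemma for $\int_0^t x^n e^{cx}\,dx$ --- including the observation that only the $k=n$ term of the antiderivative survives at $x=0$ --- is the standard reduction formula and is correct. Two remarks. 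First, a route more in the spirit of Section 4, avoiding integration entirely, is to use the second half of Theorem \ref{FTC}: if one can guess $f$ with $D^{\alpha}f=g$, then $I^{\alpha}_{a}g(t)=f(t)-e^{\frac{\beta}{\alpha}(a-t)}f(a)$. For instance $D^{\alpha}e^{t}=e^{t}$ gives item (2) at once, since $e^{\frac{\beta}{\alpha}(a-t)}e^{a}=e^{(a-\beta t)/\alpha}$, and $D^{\alpha}\left(\beta\sin t-\alpha\cos t\right)=(\alpha^{2}+\beta^{2})\sin t$ gives item (1) after dividing by $\alpha^{2}+\beta^{2}$. Second, items (3) and (4) --- and correspondingly your divisions by $c$ and $c^{k+1}$ --- implicitly require $\beta\neq 0$, i.e.\ $\alpha\neq 1$; at $\alpha=1$ those formulas must be replaced by their limiting values $\lambda(t-a)$ and $t^{n+1}/(n+1)$. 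That caveat is a defect of the proposition as stated, not of your argument.
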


\section{\textbf{Applications to Fractional Differential Equations}}
We solve some simple linear fractional differential equations using  deformable derivative as $D^\alpha$ operator. In first example we discuss method of solving  homogeneous linear, while in second non-homogeneous linear fractional differential equations.\\
 
\begin{example} Consider the fractional differential equation:  
\[
D^{\alpha} y(t)+P(t) y(t)=0,
\] 
where $P(t)$ is continuous. Using expression (**), the equation get transformed to 
\begin{eqnarray*}
\alpha Dy+ \beta y+P(t) y &=& 0\\
\Rightarrow\quad Dy+\frac{\left(\beta+P(t)\right)}{\alpha}y &=& 0.
\end{eqnarray*}
Which is simple first order linear ordinary differential equation  whose general solution is given by 
\[
y = C e^{\displaystyle{\frac{-\left(\beta t+\int P(t)dt\right)}{\alpha}}}
\]
where $C$ is arbitrary constant.
\end{example}

\begin{example} We now consider a non-homogeneous linear fractional equation:
\[
D^{1/2}y+y=te^{-t}.
\] 
This can be written as 
\[
\frac{1}{2}y+\frac{1}{2}Dy + y = te^{-t}\quad
\Rightarrow \quad Dy + 3y = 2te^{-t}
\]
Whose general solution is given by 
\[
y(t)=Ce^{-3t}+\left (t-\frac{1}{2}\right)e^{-t}
\]
where $C$ is a constant.
\end{example}

\begin{example} The fractional differential equation :
\[
D^{\alpha_{2}}[D^{\alpha_{1}}y(t)]=0
\]
is equivalent to the following second order homogeneous differential equation:
\[
\alpha_{1}\alpha_{2}D^2y+(\alpha_{1}\beta_{2}+\alpha_{2}\beta_{1})Dy+\beta_{1}\beta_{2}y=0.
\]
The roots its auxiliary equation are:
\[
\frac{-\beta_{1}}{\alpha_{1}}\; \mbox{and}\;\frac{-\beta_{2}}{\alpha_{2}}.
\]
Hence in case of distinct roots the general solution of the fractional differential equation is
\[
y=C_1e^{\displaystyle{-\frac{\beta_1}{\alpha_1}t}}+C_2e^{\displaystyle{-\frac{\beta_2}{\alpha_2}t}},
\]
and in case of repeated roots, we have
\[
y=(C_1+C_2t)e^{\displaystyle{-\frac{\beta}{\alpha}t}}
\]
\end{example}

We end up the paper with some important questions that are yet to be answered.
\begin{itemize}
\item [$($\rm i$)$]	What is geometric interpretation and physical significance of the deformable derivative ?
\item [$($\rm ii$)$]  Is there any similarity between the classical  fractional derivative and deformable derivative ?
\item [$($\rm iii$)$] Deformable derivative is equivalent to ordinary one but not same so it could be used to analyze function.    
\end{itemize}

\bibliographystyle{amsplain}
\bibliography{document2}
\end{document}